\newtheorem{theorem}{Theorem}[section]
\newtheorem{lemma}[theorem]{Lemma}
\newtheorem{corollary}[theorem]{Corollary}
\newtheorem{example}[theorem]{Example}
\author{Erik Sj\"oland}
\title{Enumeration of monochromatic three term arithmetic progressions in two-colorings of cyclic groups}
\begin{document}

\newpage

\maketitle

\begin{abstract}
One of the toughest problems in Ramsey theory is to determine the existence of monochromatic arithmetic progressions in groups whose elements have been colored. We study the harder problem to not only determine the existence of monochromatic arithmetic progressions, but to also count them. We reformulate the enumeration in real algebraic geometry and then use state of the art computational methods in semidefinite programming and representation theory to derive sharp, or an explicit constant from sharp, lower bounds for the cyclic group of any order.
\end{abstract}

\tableofcontents

\newpage

\section{Introduction}

In Ramsey theory there are two closely related concepts of forcing structures, by colors or density: If we color the integers with a finite number of colors, then there are monochromatic arithmetic progressions of any length. If we choose any subset of the integers with positive density, then there are also arithmetic progressions of any length.

Getting finite, one can ask how large does $n$ need to be for us to find a monochromatic progression in $1,2,\ldots, n$, given a certain number of colors and a desired length of progressions? And more difficult than concluding the existence of monochromatic arithmetic progressions, could we count them? That is the goal of our research project.

To count the  monochromatic arithmetic progressions, we reformulate it as a problem of real algebraic geometry that can be attacked by state of the art optimization theory and provide human readable proofs. This is not the ordinary relaxation right off from binary combinatorial problems to linear or semidefinite programming, but one that allows us to use representation theory for modding out symmetries.

In the next section we present our results, and after that we present our methods and proofs.

\section{Results}
The case $n \mod 24 \in \{1,5,7,11,13,17,19,23\}$ in the following theorem was previously known \cite{Cameron2007}. To prove the theorem they used orthogonal arrays, and we do not see how their methods could be generalized to the other cases. Our method is based on completely different methods, and to our knowledge all other cases are new.

\begin{theorem}
\label{thm:cycliccase}
Let $n$ be a positive integer and let $R(3,\mathbb{Z}_n,2)$ denote the minimal number of monochromatic $3$-term arithmetic progressions in any two-coloring of $\mathbb{Z}_n$. $n^2/8-c_1n+c_2 \leq R(3,\mathbb{Z}_n,2) \leq n^2/8-c_1n+c_3$ for all values of $n$, where the constants depends on the modular arithmetic and are tabulated in the following table.
\[
\begin{array}{c|c|c|c}
n \mod 24 & c_1 & c_2 & c_3 \\
\hline
1,5,7,11,13,17,19,23 & 1/2 & 3/8 & 3/8 \\
8,16 & 1 & 0 & 0 \\
2,10 & 1 & 3/2 & 3/2 \\
4,20 & 1 & 0 & 2 \\
14,22  & 1 & 3/2 & 3/2 \\
3,9,15,21 &  7/6 & 3/8 & 27/8 \\ 
0 &  5/3 & 0  & 0 \\ 
12 &  5/3 & 0 & 18 \\ 
6,18 &  5/3 &1/2 & 27/2 \\ 
\end{array}
\]
\end{theorem}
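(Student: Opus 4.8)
The plan is to translate the enumeration into the minimization of an explicit quadratic form over the discrete cube and then attack that form with discrete Fourier analysis and a symmetry-reduced semidefinite program. First I would encode a two-coloring as a map $f\colon \mathbb{Z}_n \to \{-1,+1\}$. A progression $(a,a+d,a+2d)$ is monochromatic exactly when $f(a),f(a+d),f(a+2d)$ are all equal, and for $\pm 1$ values the indicator of ``all three equal'' is $\tfrac14\bigl(1 + f(a)f(a+d) + f(a+d)f(a+2d) + f(a)f(a+2d)\bigr)$. Summing this over the chosen index set turns the count $N(f)$ into a degree-two polynomial in the $f(x)$. I would fix the counting convention at the outset (unordered progressions, $d$ ranging over the nonzero differences, degenerate triples subtracted off, and a division by two for the two orderings of a genuine three-element set), because this convention is precisely what calibrates $c_1,c_2,c_3$.

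Next I would evaluate the three correlation sums $\sum_{a,d} f(a)f(a+kd)$ via the transform $\hat f$ on $\mathbb{Z}_n$; each collapses to $\sum_{t:\,kt\equiv 0} |\hat f(t)|^2$. The gaps with $k=1$ isolate $|\hat f(0)|^2 = \sigma^2$, where $\sigma=\sum_x f(x)$ is the color imbalance, while the gap $k=2$ additionally picks up $\tau^2 := |\hat f(n/2)|^2$ when $n$ is even, where $\tau$ measures the imbalance between even and odd positions. Thus, up to convention-dependent corrections, $N(f)=\tfrac14\bigl(C n^2 + 3\sigma^2 + \tau^2\bigr)+(\text{corrections})$, the $\tau^2$ term being present only for even $n$. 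The corrections come from degenerate differences with $2d\equiv 0$, and, when $3\mid n$, from cosets of the order-three subgroup, whose three points form an arithmetic progression in more than one way. These two mechanisms introduce the dependence on $n\bmod 3$, which together with the parity refinements forces the split modulo $24$ and explains why $c_1$ effectively depends only on $\gcd(n,6)$.

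For the lower bound I would treat the minimization of $N(f)$ over $\{-1,+1\}^n$ as a binary quadratic program and relax it to a semidefinite program in the Gram matrix of the $f(x)$. The dihedral group $\mathbb{Z}_n\rtimes\{\pm 1\}$ acts on the problem, since translations and the reflection $x\mapsto -x$ preserve both the family of progressions and the objective; averaging the SDP over this group and block-diagonalizing by its representation theory (equivalently, the Fourier isotypic decomposition) reduces the problem dramatically. Because the objective lives on only a handful of Fourier modes, the reduced SDP is essentially one- or two-dimensional, and its dual should furnish a short, human-readable certificate yielding the bound $n^2/8 - c_1 n + c_2$ in each residue class.

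For the matching upper bound I would exhibit explicit near-balanced colorings: arrange the colors so that $\sigma$ and $\tau$ are as small as the congruences allow, and, when $3\mid n$, simultaneously minimize the number of monochromatic order-three cosets; the best such arrangement depends on $n\bmod 24$ and produces $c_3$. I expect the main obstacle to be the careful bookkeeping of the non-generic progressions --- the repeated-element triples and especially the coset progressions when $3\mid n$ --- since these control the mod-three part of $c_1$ and drive the residue split. In the rows divisible by six this combinatorial optimization appears hardest to pin down exactly, which is why I anticipate the lower and upper bounds to diverge there (so $c_2<c_3$) rather than meet.
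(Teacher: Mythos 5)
Your overall route is essentially the paper's: the same $\pm 1$ encoding and indicator polynomial $\tfrac14(1+x_ax_b+x_bx_c+x_ax_c)$, a lower bound obtained by diagonalizing the resulting circulant quadratic form (your Fourier modes $|\hat f(0)|^2$, $|\hat f(n/2)|^2$ and the order-three coset terms are exactly the paper's explicit sum-of-squares certificates $I_2$, $I_3$, $I_4$, $I_5$), and block/interval colorings with minimal imbalance for the matching upper bounds. So the skeleton is sound, and your identification of the correction mechanisms ($2d\equiv 0$ and the order-three cosets) as the source of the mod-$24$ split is accurate.

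The genuine gap is in the lower bounds with $c_2>0$, most acutely the rows $n\equiv 2,10,14,22 \pmod{24}$, where the theorem asserts \emph{matching} constants $c_2=c_3=3/2$. Your plan is to get the lower bound from the dual of the symmetry-reduced SDP, but that relaxation lives on $[-1,1]^n$ (equivalently, on Gram matrices), and its optimal value in these residue classes is $n^2/8-n$, i.e.\ $c_2=0$; since $n^2/8-n$ is an integer minus $3/2$ when $n\equiv 2\pmod 4$, even appealing to integrality of the count only lifts this to $n^2/8-n+1/2$. No SDP dual certificate can close the remaining gap of $1$, because the obstruction is arithmetic, not spectral: one must rule out colorings achieving the fractional bound. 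The paper supplies the missing unit with a separate combinatorial lemma (Lemma \ref{lem:even}): for $n\equiv 2\pmod 4$ \emph{every} two-coloring of $\mathbb{Z}_n$ has an \emph{even} number of monochromatic $3$-APs, proved by a color-switching/induction argument on an auxiliary graph; rounding the relaxation bound up to an even integer then gives exactly $(n-2)(n-6)/8 = n^2/8-n+3/2$. Your proposal contains no counterpart to this step. (One can in fact avoid the lemma by exploiting your exact Fourier identity over genuine colorings --- splitting on whether $f$ is $n/2$-periodic, in which case $n/2$ odd forces $\sigma^2\geq 4$, versus $P=\sum_a f(a)f(a+n/2)\leq n-4$ otherwise --- but that, too, is an integrality argument over $\{\pm1\}^n$ that the SDP dual alone does not provide, and your text does not make it.) A smaller inaccuracy: your prediction that $c_2<c_3$ precisely in the rows divisible by six is wrong --- row $0$ has $c_2=c_3=0$, while rows $4,20$, which are prime to $3$, have $c_2=0<2=c_3$; the divergence is driven by the block colorings' failure to meet the spectral bound in those classes, not by divisibility by six.
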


It requires little work to see that the following result follows:

\begin{corollary}
\label{cor:dihedralcase}
Let $n$ be a positive integer. Let $R(3,\mathbb{Z}_n,2)$ and $R(3,D_{2n},2)$ denote the minimal number of monochromatic $3$-term arithmetic progressions in any two-coloring of $\mathbb{Z}_n$ and $D_{2n}$ respectively. The following equality holds
\[
R(3,D_{2n},2)=2R(3,\mathbb{Z}_n,2).
\]
In particular $n^2/4-2c_1n+2c_2 \leq R(D_{2n};3) \leq n^2/4-2c_1n+2c_3$ where the constants can be found in the table of Theorem \ref{thm:cycliccase}.
\end{corollary}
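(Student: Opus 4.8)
The plan is to reduce the dihedral problem to two \emph{independent} copies of the cyclic one by exploiting the coset decomposition of $D_{2n}$. Write $D_{2n} = \langle r, s \mid r^n = s^2 = 1,\ srs = r^{-1}\rangle$, and recall that a three-term arithmetic progression in a group is a triple $(x, xg, xg^2)$ with common difference $g$ (equivalently, a triple of equal consecutive differences). Partition $D_{2n}$ as a set into the rotation subgroup $R = \{r^i : 0 \le i < n\}$ and the reflection coset $T = \{r^i s : 0 \le i < n\}$. First I would analyze which cosets the three terms of a progression can occupy, as a function of the type of the common difference $g$.

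The key structural observation is the following. If $g = r^j s$ is a reflection, then $g^2 = 1$, so the progression $(x, xg, xg^2) = (x, xg, x)$ is degenerate; such triples are automatically monochromatic and, under whatever convention is fixed for $\mathbb{Z}_n$, are counted identically on both sides. If instead $g = r^j$ is a rotation, then all three terms stay inside the coset of $x$: for $x = r^i$ one gets $(r^i, r^{i+j}, r^{i+2j}) \subseteq R$, and for $x = r^i s$ one gets $(r^i s, r^{i-j} s, r^{i-2j} s) \subseteq T$. Hence \emph{no} nondegenerate progression straddles $R$ and $T$. I would then record the two bijections $r^i \mapsto i$ on $R$ and $r^i s \mapsto i$ on $T$, and check that each carries the family of three-term progressions lying inside that coset onto the family of three-term progressions of $\mathbb{Z}_n$; for $T$ the common difference $j$ is replaced by $-j$, which is harmless since the progressions of $\mathbb{Z}_n$ are symmetric under $d \mapsto -d$. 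Thus the progression hypergraph of $D_{2n}$ is the disjoint union of two copies of the progression hypergraph of $\mathbb{Z}_n$.

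With this in hand the enumeration separates. A two-coloring of $D_{2n}$ is exactly a pair consisting of a two-coloring of $R$ and a two-coloring of $T$, and since no progression mixes the two cosets, the number of monochromatic progressions is the sum of the number contributed by $R$ and the number contributed by $T$, each depending only on its own coloring. Minimizing the two summands independently gives $R(3,D_{2n},2) = R(3,\mathbb{Z}_n,2) + R(3,\mathbb{Z}_n,2) = 2R(3,\mathbb{Z}_n,2)$; substituting the bounds of Theorem \ref{thm:cycliccase} and multiplying through by $2$ yields the stated explicit inequalities. The only delicate points---and the reason this is ``little work'' rather than none---are pinning down the degenerate-progression convention so that it matches on both cosets, and verifying the sign-reversal bijection on $T$; once the two coset hypergraphs are seen to be isomorphic to that of $\mathbb{Z}_n$, the separability of the minimization, and hence the factor of $2$, is immediate.
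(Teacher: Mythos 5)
Your proposal is correct and follows essentially the same route as the paper: both arguments rest on the observation that a common difference which is a reflection squares to the identity (making the progression degenerate), so every nondegenerate progression lies inside one of the two rotation-cosets, each of which carries a copy of the $\mathbb{Z}_n$ progression structure, and the two cosets can therefore be colored and minimized independently. Your write-up is somewhat more explicit about the coset bijections and the sign reversal on the reflection coset, but the substance is identical to the paper's proof.
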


\section{Polynomial optimization}
\label{sec:poly}
The established results can be understood without understanding any polynomial optimization, but finding a sum of squares certificate by hand would be very difficult. In order to find the algebraic certificates in Theorem \ref{thm:cycliccase} results from real algebraic geometry had to be used to rewrite the problem of counting arithmetic progressions as a semidefinite program. Patterns were then found in the numerical solutions of the semidefinite programs, and through careful analysis of the numerical patterns we found the algebraic certificates that proves our main result. In this article we will only give some elementary definitions and examples for the unfamiliar reader that are sufficient for our approach. For the interested reader we refer to the extensive survey by Laurent \cite{Laurent2009}. We refer to \cite{Sjoland_methods} for a survey on how the methods were used and implemented for the particular problem.

A polynomial optimization problem is a problem on the form

\[
\begin{array}{rll}
\rho_* = \inf  & \displaystyle  f(x) \\
\textnormal{subject to} & \displaystyle  g_1(x) \geq 0, \dots, g_m(x) \geq 0, \\
& \displaystyle  x \in \mathbb{R}^n,
\end{array}
\]
where $f(x),g_1(x),\dots,g_m(x)$ are given polynomials. A strategy to solve a polynomial optimization problem is to introduce a new variable $\lambda$, and study the related problem:
\[
\begin{array}{rll}
\rho^* = \sup  & \lambda \\
\textnormal{subject to} & \displaystyle f(x)-\lambda \geq 0, g_1(x) \geq 0, \dots, g_m(x) \geq 0 \\
& \lambda \in \mathbb{R}, x \in \mathbb{R}^n
\end{array}
\]
where $f(x),g_1(x),\dots,g_m(x)$ are the same polynomials. For a discussion on the relationship between these problems we refer to the book by Lasserre \cite{Lasserre2010}, containing among other things the proof of strong duality; that $\rho_* = \rho^*$.

In real algebraic geometry one tries to find various relationships between nonnegative polynomials and sums of squares, which are known as Positivstellens\"atze. Let $X$ be a formal indeterminate, and define
\[
\begin{array}{rll}
\sigma^* = \sup  & \lambda \\
\textnormal{subject to} & \displaystyle f(X) - \lambda= \sigma_0 + \sum_{i=1}^m \sigma_ig_i \\
& \displaystyle  \sigma_i \textrm{ is a sum of squares in } X.
\end{array}
\]
It is easy to see that $\sigma^* \leq \rho^*$, and it can be proven that $\sigma^* = \rho^*$ under some technical conditions (Archimedean). This is known as Putinar's Positivstellensatz. The problem can be relaxed to $\sigma^*_d$, in which there are no monomials of degree larger than $d$. It is easy to see that $\sigma_{d_1}^* \leq \sigma_{d_2}^*$ if $d_1 < d_2$, and Lasserre \cite{Lasserre2001} has proven that if the Archimedean condition hold, then
\[
\lim_{d \rightarrow \infty} \sigma^*_d = \sigma^* = \rho^* = \rho_*.
\]
The reason we rewrote the positivity condition as a sum of squares condition is that the latter is equivalent to a semidefinite condition: $f(x)$ is a sum of squares of degree $2d$ if and only if it is possible to write $f(x) = v_d^T Q v_d$ where $v_d$ is the vector of all monomials up to degree $d$ and $Q$ is some positive semidefinite matrix. This makes it possible to find $\sigma^*_d$ and a sum of squares based certificate for the lower bound of our original polynomial, $f(x) = \sigma^*_d + \sigma_0 + \sum_{i=1}^m \sigma_i g_i \geq \sigma^*_d$, using semidefinite programming.

\begin{example}
To find a lower bound to
\[
\begin{array}{rll}
\rho_* = \inf  & \displaystyle  x_1x_2 \\
\textnormal{subject to} & \displaystyle  -1\leq x_1,x_2 \leq 1, \\
& \displaystyle  x \in \mathbb{R}^2,
\end{array}
\]
we write
\[
\begin{array}{rll}
\rho^* = \sup  & \lambda \\
\textnormal{subject to} & \displaystyle x_1x_2 - \lambda \geq 0, -1\leq x_1,x_2 \leq 1, 
\end{array}
\]
and look at the degree 3 relaxation, where $v_1=[1,X_1,X_2]^T$:
\[
\begin{array}{rll}
\sigma_2^* = \sup  & \lambda \\
\textnormal{subject to} & \displaystyle f(x) - \lambda= v_1^TQ_0v_1 + \sum_{i=1}^2 v_1^TQ_i^+v_1 (1+X_i)+ \sum_{i=1}^2 v_1^TQ_i^-v_1 (1-X_i), \\
& Q_0,Q_1^+,Q_2^+,Q_1^-,Q_2^- \succeq 0.
\end{array}
\]
Note that this is a semidefinite program with variables $\lambda, Q_0,Q_1^+,Q_2^+,Q_1^-,Q_2^-$, which we can solve using any software for semidefinite programming. A lower bound for $x_1x_2$ on the set $-1 \leq x_1,x_2 \leq 1$ is obtained by solving the relaxed optimization problem. We find a sum of squares based certificate:
\[ 
\begin{array}{rl}
X_1X_2 =&\displaystyle -1+\frac{1}{2}(X_1+X_2)^2 + \frac{1}{2}(1-X_1)^2(1+X_1) + \frac{1}{2}(1+X_1)^2(1-X_1) \\
               &\displaystyle+ \frac{1}{2}(1-X_2)^2(1+X_2) +\frac{1}{2}(1+X_2)^2(1-X_2) \\
               =&\displaystyle -1+ \frac{1}{2}(X_1+X_2)^2 + (1-X_1^2) + (1-X_2^2)  \\
               \geq &\displaystyle -1.
               \end{array}
\]
Since this is a simple example, it is easy to verify that this is the sharpest possible lower bounds using other methods. Examples with many variables and high degrees cannot be solved exact using other methods, and obtaining a lower bound using semidefinite programming is the current state-of-the art for many polynomial optimization problems.
\end{example}

\section{Counting monochromatic arithmetic progressions using semidefinite program}
\label{sec:methodscolor}
An arithmetic progression in $\mathbb{Z}_n$ of length $k$ is a $k$-set ($k$ distinct element) $\{a,a+b,\dots, a+(k-1)b \}$ where $a \in \mathbb{Z}_n$ and $b \in \mathbb{Z}_n \smallsetminus \{0\}$. When summing over all arithmetic progression we note that for example $\{1,2,3\}$, $\{1,3,2\}$, $\{2,1,3\}$, $\{2,3,1\}$, $\{3,1,2\}$, $\{3,2,1\}$ all denote the same set, and hence we only use one representative to avoid double counting. 

Let $\chi : \mathbb{Z}_n \rightarrow \{-1,1\}$ be a $2$-coloring of the group $\mathbb{Z}_n$, and let $x_g = \chi(g)$ for all $g \in \mathbb{Z}_n$. Let also $x=[x_0, \dots, x_{n-1}]$ denote the vector of all variables $x_g$. For $a,b,c \in \mathbb{Z}_n$, let us introduce the polynomial
\[
\begin{array}{rl}
p(x_a,x_b,x_c) &= \displaystyle \frac{(x_a+1)(x_b+1)(x_c+1)-(x_a-1)(x_b-1)(x_c-1)}{8}  \\
&= \displaystyle \frac{x_ax_b+x_ax_c+x_bx_c+1}{4}
\end{array}
\]
which has the property that
\[
p(x_a,x_b,x_c) = \left\{ 
\begin{array}{ll}
1 & \text{if }  x_a=x_b=x_c\\
0 & \text{otherwise.}
\end{array} \right. 
\]
In other words, if $\{a,b,c\}$ is an arithmetic progression, then the polynomial $p$ is one if $\{a,b,c\}$ is a monochromatic arithmetic progression and zero otherwise. It follows that the minimum number of monochromatic arithmetic progression of length 3 in a 2-coloring of $\mathbb{Z}_n$ is
\[
R(3,\mathbb{Z}_n,2)=  \min_{x \in \{-1,1\}^{n}} \displaystyle\sum_{\{a,b,c\} \textrm{ is an A.P. in } \mathbb{Z}_n}p(x_a,x_b,x_c).
\]

To find a lower bound for $R(3,\mathbb{Z}_n,2)$ we relax the integer quadratic optimization problem to a quadratic optimization problem on the hypercube. Since it is a relaxation, i.e. any solution of the integer program is also a solution to the hypercube problem, we have
\[
\begin{array}{rl}
R(3,\mathbb{Z}_n,2) &\displaystyle \geq \min_{ x \in [-1,1]^{n}} \sum_{\{a,b,c\} \textrm{ is an A.P. in } \mathbb{Z}_n}p(x_a,x_b,x_c) \\
& = \displaystyle \min_{ x \in [-1,1]^{n}} \sum_{\{a,b,c\} \textrm{ is an A.P. in } \mathbb{Z}_n} \frac{x_ax_b+x_ax_c+x_bx_c+1}{4} \\
& = \displaystyle \min_{ x \in [-1,1]^{n}} \frac{p_n}{4}+\sum_{\{a,b,c\} \textrm{ is an A.P. in } \mathbb{Z}_n}  \frac{1}{4}
\end{array}
\]
where 
\[ p_n = \sum_{\{a,b,c\} \textrm{ is an A.P. in } \mathbb{Z}_n}x_ax_b+x_ax_c+x_bx_c. \]
A priori we do not know how much we loose by doing the relaxation from the integer hypercube to the continuous hypercube. As it turns out for this problem we loose very little as the certificates in the main theorem are at most a constant from the solution we would get from the integer program.

Finding a lower bound for the homogeneous degree 2 polynomial $p_n$ immediately gives us a lower bound for $R(3,\mathbb{Z}_n,2)$. The monomial $x_ax_b$ occurs in $p_n$ as many times as the pair $(a,b)$ occurs in a 3-arithmetic progression, which depends on $n$. To find a lower bound to $ \min_{ x \in [-1,1]^{n}}  p_n$ it is suitable to use the state-of-the-art methods surveyed in Section \ref{sec:poly}.

In particular, let us use the degree 3 relaxation of Putinar's Positivstellensatz, and let the maximal lower bound using this relaxation be denoted $\lambda^*$. Let $v_1=[X_0, \dots, X_{n-1}, 1]^T$ be the vector of all monomials of degree less or equal to one. We get 
\[
\begin{array}{rl}
\lambda^* = \max  & \lambda \\
\textnormal{subject to:} & \displaystyle p_n - \lambda = v_1^T Q_0 v_1 + \sum_{i=0}^{n-1} v_1^T Q_i^+ v_1(1+X_i) + \sum_{i=0}^{n-1}  v_1^T Q_i^- v_1 (1-X_i), \\
& \displaystyle \lambda \in \mathbb{R}, \\
& \displaystyle Q_0,Q_i^+,Q_i^- \succeq 0 \textnormal{ for all } i \in \mathbb{Z}_n,
\end{array}
\]
which is a semidefinite program that can be solved numerically for fixed $n$.

The coefficients in the sum of squares certificate we get using these methods are numerical, and hence to get an algebraic positivity certificate one has to analyze the solutions further. There is no general way of doing this, and in most cases it is difficult to find good lower bounds using this procedure. We need algebraic certificates to provide solutions to all cyclic groups, not only for the cases we can solve numerically.

It was enough to solve the semidefinite program above for $n \leq 20$ before we found the patterns that lead to Theorem \ref{thm:cycliccase}. All the necessary computations could easily be carried out on a laptop within minutes to find numerical solutions. To extend the numerical solutions for $n \leq 20$ into algebraic certificates for all $n$ was more cumbersome and required more work. 

To go from numerical solutions to algebraic certificates is for most problems very difficult. For a specific group, all the information required to find a lower bound is contained in an eigenvalue decomposition of the involved matrices, but there is no general way of finding the optimal algebraic lower bound when the eigenvalues and eigenvectors have decimal expansions that cannot trivially be translated into algebraic numbers. If one is interested in a rational approximation to the lower bound one can use methods by Parrilo and Peyrl \cite{Parrilo2008}. These methods gives a nice certificate for a specific problem but provide little help when one want to find certificates for an infinite family of problems.

To find an algebraic certificate for all cyclic groups one of the tricks we used was to restrict the SDP above further without changing the optimal value. We required some entires to equal one another and forced some entries to be zero. There are also many other ways to restrict the SDP further that works for other problems. Another trick one can try is to change the objective function slightly to try to force the SDP to have only one optimal solution instead of infinitely many.

\section{Proofs of Theorems \ref{thm:cycliccase} and Corollary \ref{cor:dihedralcase}}
\label{sec:proof1}

To make the computations in the proofs that follow readable, let us introduce additional notation:
\[
\sigma(a;b_0,b_1,\ldots,b_{n-1})=a+ \sum_{i,j\in \mathbb{Z}_n} b_{j-i} X_iX_j.
\]

By elementary calculations we have the following equalities, which we need in the proofs:

\[
I_1=\displaystyle  \sum_{i\in \mathbb{Z}_{n}} (1-X_i^2)  =  \sigma(n;  -1,  0, \dots ,  0),
\]
\[
I_2=\displaystyle \Big( \sum_{i\in \mathbb{Z}_{n}} X_i \Big)^2  = \sigma\Big( 0;  1,  2,  \dots,  2 \Big), 
\]
\[
I_3=\displaystyle \Big( \sum_{i\in \mathbb{Z}_{n}} (-1)^i X_i \Big)^2 = \sigma\Big( 0;  1, -2, 2, -2, \dots, 2, -2 \Big), 
\]
\[
I_4=\displaystyle \sum_{i \in \mathbb{Z}_{n/2-1}}(X_i-X_{i+n/2})^2 = \sigma\Big( 0; 1 , 0 , \dots, 0 ,-2 , 0, \dots, 0 \Big) , 
\]
\[
\begin{array}{rl}
I_5 &= \displaystyle \sum_{i \in \mathbb{Z}_{n/3-1}}(X_i-X_{i+n/3})^2 +(X_i-X_{i+2n/3})^2   + (X_{i+n/3}-X_{i+2n/3})^2 \\
&=  \displaystyle \sigma\Big( 0; 2 , 0 , \dots, 0 ,-2 , 0, \dots, 0, -2, 0, \dots, 0 \Big) . 
\end{array}
\]

The first one is non-negative due to the boundary conditions $-1 \leq x_i \leq 1$ and the other ones are non-negative since they are sums of squares.

To prove Theorem \ref{thm:cycliccase} we need a small Lemma.
\begin{lemma}
\label{lem:even}
Let us consider $\mathbb{Z}_{n}$ for any positive integer $n$ such that $4$ divides $n+2$. The number of monochromatic $3$-arithmetic progressions is even for all $2$-colorings of $\mathbb{Z}_n$.
\end{lemma}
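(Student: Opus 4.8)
The plan is to work modulo $2$ throughout and reduce the statement to a few elementary divisibility facts about how arithmetic progressions sit inside $\mathbb{Z}_n$. Writing $y_g=(1-x_g)/2\in\{0,1\}$ for the $0/1$ version of the coloring, a progression $\{a,b,c\}$ is monochromatic exactly when $y_a=y_b=y_c$, and a short computation shows that over $\mathbb{F}_2$ its indicator equals $1+(y_a+y_b+y_c)+(y_ay_b+y_ay_c+y_by_c)$. Letting $M$ denote the number of monochromatic $3$-term progressions, summing this indicator over all progressions and reducing modulo $2$ I would obtain
\[
M \equiv N + D\sum_{g\in\mathbb{Z}_n} y_g + \sum_{\{g,h\}} m_{gh}\,y_g y_h \pmod 2,
\]
where $N$ is the number of progressions (as sets), $D$ is the number of progressions through a fixed point (independent of the point by translation invariance), and $m_{gh}$ is the number of progressions through a fixed pair $\{g,h\}$. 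The next step is to invoke the fact that a multilinear polynomial over $\mathbb{F}_2$ vanishes on all of $\{0,1\}^n$ if and only if every coefficient is $0$; hence $M$ is even for every coloring precisely when $N$, $D$ and each $m_{gh}$ are even, and the problem becomes purely enumerative.

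For $N$ I would use the translation $P\mapsto P+n/2$. Since $4\mid n+2$, the element $n/2$ is odd, so this translation is a fixed-point-free involution of $\mathbb{Z}_n$; a three-element set cannot be invariant under a fixed-point-free involution (its orbits have size $2$, while $3$ is odd), so the involution pairs the progressions and $N$ is even.

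The evenness of $m_{gh}$ is the heart of the argument and the step I expect to be the main obstacle. Setting $\delta=h-g$, the third point $t$ of a progression $\{g,h,t\}$ must be one of $2g-h$, $2h-g$, or a solution of $2t=g+h$ (the latter existing only when $\delta$ is even, in which case there are two, differing by $n/2$). I would split on the parity of $\delta$ and carefully track coincidences among these candidates as well as the forbidden values $t\in\{g,h\}$. The delicate subcases are $\delta=n/2$ (where both linear candidates collapse onto $g$ and $h$, giving $m_{gh}=0$) and $3\delta\equiv 0$ (the "regular" triples, which occur only when $3\mid n$). The count works out to $m_{gh}\in\{0,2,4\}$ in every case, and the reason the regular triples never force an odd value is the arithmetic observation that $n\equiv 2\pmod 4$ together with $3\mid n$ forces $n/3$ to be even, so $3\delta\equiv 0$ can only happen for even $\delta$.

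Finally I would deduce the evenness of $D$ from either the relation $2D=\sum_{h\ne g}m_{gh}$ or the closed form $D=3N/n$, computing $N$ explicitly while accounting for the fact that each regular triple is represented six times rather than twice among ordered progressions. Writing $n=2m$ with $m$ odd, this yields $D=3(m-1)$ when $3\nmid n$ and $D=3m-5$ when $3\mid n$, both even. With $N$, $D$ and all $m_{gh}$ shown even, every coefficient in the displayed $\mathbb{F}_2$ expression vanishes, so $M\equiv 0\pmod 2$ for every coloring, which is the claim.
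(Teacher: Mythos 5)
Your proof is correct, but it converts the key facts into the conclusion by a genuinely different mechanism than the paper. The paper establishes essentially the same three parity facts in graph-theoretic form---the ``link graph'' of a vertex (edges $\{i,j\}$ whenever $\{0,i,j\}$ is an A.P.) has all degrees even (your $m_{gh}$), an even number of edges (your $D$), and the total number of progressions is even (your $N$)---but then finishes by induction on colorings: switching the color of a single element changes the monochromatic count by an even number, and the one-color base case is even. Your multilinear identity over $\mathbb{F}_2$ replaces that switching induction by a single algebraic statement: once $N$, $D$ and every $m_{gh}$ are even, $M\equiv N\equiv 0 \pmod 2$ for all colorings simultaneously (note you only need the trivial direction of the coefficient fact, not uniqueness of the multilinear representation), and it even yields the stronger conclusion that the parity of $M$ is coloring-independent. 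Your handling of the enumerative core is in one respect more careful than the paper's: when $3\mid n$ the paper asserts that every even-labeled vertex has degree $4$, which fails at $a=n/3,\,2n/3$, where three of the four listed progressions coincide as the single set $\{0,n/3,2n/3\}$ and the degree is $2$; your explicit $3\delta\equiv 0$ subcase, together with the observation that $n\equiv 2\pmod 4$ forces $n/3$ to be even so this can only occur for even $\delta$, repairs exactly this point (the paper's conclusion survives because $2$ is still even). One small caveat on your last step: the relation $2D=\sum_{h\neq g}m_{gh}$ cannot by itself give the parity of $D$, since dividing by $2$ destroys parity information; you genuinely need the explicit counts, and those you state are correct---with $n=2m$, $m$ odd, one gets $D=\tfrac{1}{2}\bigl(2(m-1)+4(m-1)\bigr)=3(m-1)$ when $3\nmid n$ and $D=\tfrac{1}{2}\bigl(2(m-1)+4+4(m-3)\bigr)=3m-5$ when $3\mid n$, both even.
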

\begin{proof}
It is easy to see that the total number of arithmetic progressions is even.

Let $R \subseteq \{0,\dots, n-1\}$ be the red elements and $B= \{0,1,\dots,n-1\} \smallsetminus R$ be the blue elements in a coloring of $\mathbb{Z}_n$. Let $G(V,E)$ be the graph with vertices labeled by $\{1,\dots,n-1\}$ and edges $e_{ij}$ between the vertices $i$ and $j$ whenever $\{0,i,j\}$ forms an arithmetic progression.

Pick a vertex $a$ in $G$ and denote the vertices adjacent to $a$ by $\{n_1,\dots,n_d\}$. If $a=n/2$ it is easy to see that $d=0$, if $a$ is another odd labeled vertex we have $d=2$ since the integer $a$ is in the arithmetic progressions $\{n-a,0,a\}$ and $\{0,a,2a\}$ and if $a$ is in an even labeled vertex then $d=4$ since the integer $a$ is in the arithmetic progressions $\{-a,0,a\},\{0,a,2a\},\{0,a/2,a\},\{0,a/2+n/2,a\}$. In particular we note that $d$ is always even.

$|E|$ is even because there are an even number arithmetic progressions $\{n-a,0,a\}$ and for every arithmetic progression $\{0,a,2a\}$ there is an arithmetic progression $\{0,n-a,n-2a\}$.

Let again $a \in G$, and let $N_b$ be the number of blue adjacent vertices, $N_r$ the number of red adjacent vertices, $d=N_b+N_r$ and let $N_{mixed}$ denote the number of edges from a red vertex to a blue vertex in $G$. Suppose WLOG that $a$ is blue. Switching the color of $a$ from blue to red would imply that the number of edges from a red vertex to a blue vertex in $G$ changes to $N_{mixed}'=N_{mixed}+N_b-N_r$. $N_b-N_r$ is even (possibly negative) since $d$ is even, hence since $N_{mixed}$ is $0$ if all vertices are blue $N_{mixed}'$ must be even.

Let $N_{mono}$ be the number of monochromatic arithmetic progressions in $\mathbb{Z}_n$. Since there are an even number of edges from a red vertex to a blue vertex in $G$ and since $|E|$ is even the number of edges with same-colored endpoints is even. Thus switching the color of $0$ would imply that there are $N_{mono}'=N_{mono}+N_{new}$ monochromatic arithmetic progressions in $\mathbb{Z}_n$ where $N_{new}$ is even (possibly negative). The same argument holds if we change $0$ to any other vertex, and thus any change of $N_{mono}$ is by an even number. Since $N_{mono}$ is even if all of $\mathbb{Z}_n$ have the same color it follows by induction that $N_{mono}$ is even.
\end{proof}

\begin{proof}[Proof of Theorem \ref{thm:cycliccase}]
Recall that for $x \in \{ -1,1 \}^n$ we have
\[
R(3,\mathbb{Z}_n,2) =  \displaystyle\sum_{\{a,b,c\} \textrm{ is an A.P. in } \mathbb{Z}_n}\frac{x_ax_b+x_ax_c+x_bx_c+1}{4}.
\]
To find a lower bound $R(3,\mathbb{Z}_n,2)$ we need to find a lower bound for 
\[
p_n= \displaystyle\sum_{\{a,b,c\} \textrm{ is an A.P. in } \mathbb{Z}_n}x_ax_b+x_ax_c+x_bx_c.
\]

To express $p_n(X)$ in the $\sigma$-notation is the same as to count how many arithmetic progressions a pair $(a,b) \in \{ \mathbb{Z}_n \times \mathbb{Z}_n : a<b \}$ is in for all pairs $(a,b)$, which depends on the modular arithmetic of $n$. 

{\bf n mod 2 = 1, n mod 3 $\neq$ 0:}
It is easy to see that if $2$ and $3$ does not divide $n$, then $(a,b)$ is in exactly three different arithmetic progressions of $\mathbb{Z}_n$,
\[
 (a,b,c_1), (a,c_2,b), (c_3,a,b),
 \]
 and hence
\[
p_n(X) =  \sigma(0;0,3,\ldots,3).
\]

Since $p_n(X) = \frac{3}{2}I_2+\frac{3}{2}I_1 - \frac{3n}{2} \geq - \frac{3n}{2}$, and since there are $\binom{n}{2}$ distinct arithmetic progressions in $\mathbb{Z}_n$ we get
\[ 
R(3,\mathbb{Z}_n,2) \geq  \frac{\binom{n}{2}-\frac{3n}{2}}{4}=\frac{n^2-4n}{8} = \frac{(n-1)(n-3) - 3}{8}.
\]
Since $n$ is odd $8$ divides $(n-1)(n-3)$, and thus
\[
R(3,\mathbb{Z}_n,2) \geq \left\lceil \frac{(n-1)(n-3)-3}{8}  \right\rceil = \frac{(n-1)(n-3)}{8}
\]
since we know that the number of monochromatic arithmetic progressions is an integer.

The next step is to show that the lower bound is sharp by finding a coloring with $\frac{(n-1)(n-3)}{8}$ monochromatic arithmetic progressions. To achieve this we color $0,1,2,\ldots,(n-1)/2$  red and  $(n+1)/2,\ldots, n-2, n-1$  blue. All monochromatic progressions are given by their end points, and the end points should be of the same parity of the coloring. There are two cases, $n \equiv 1$ and  $n \equiv 3$ modulo 4, to treat separately. The different types of elements for the different cases are tabulated below:
\[
\begin{array}{c|cccc}
& \textrm{red even} & \textrm{red odd} & \textrm{blue even} & \textrm{blue odd} \\
\hline
n \equiv 1 (4) & (n+3)/4 & (n-1)/4 & (n-1)/4 & (n-1)/4 \\
n \equiv 3 (4) & (n+1)/4 & (n+1)/4 & (n+1)/4 & (n-3)/4 \\
\end{array}
\]
In both cases we get
\[
\begin{array}{rl}
\displaystyle R(3,\mathbb{Z}_n,2) & \displaystyle \leq { (n+3)/4 \choose 2 }+3{(n-1)/4 \choose 2} \\
&\displaystyle = 3{(n+1)/4 \choose 2} + { (n-3)/4 \choose 2 } \\
&\displaystyle = \frac{(n-1)(n-3)}{8}
\end{array}
\]
which shows that the bound is sharp.

{\bf n mod 8 = 0, n mod 3 $\neq$ 0:}

If $2$ divides $n$ and $b \neq a + n/2$ it follows that $(a,b)$ is in the $4$ pairs $(a,b,c_1)$, $(a,c_2,b)$, $(a,c_3,b)$, $(c_4,a,b)$ if $a-b$ is even and the $2$ pairs $(a,b,c_1)$, $(c_2,a,b)$ if $a-b$ is odd. When $b=a+n/2$ then $(a,a+n/2,a+n)=(a,b,a)$ and $(a-n/2,a,a+n/2)=(b,a,b)$ are degenerate arithmetic progressions which we do not count, hence
\[
p_n(X) =  \sigma(0;0,3,\ldots,3) +  \sigma(0;0,-1,1,-1,\ldots,1,-1) +  \sigma(0;0,\ldots,0,-2,0,\ldots,0).
\]

Since $p_n(X) = \frac{3}{2}I_2+\frac{1}{2}I_3+I_4+3I_1 - 3n \geq - 3n$, and since there are $\binom{n}{2}-\frac{n}{2}$ different (the term $-\frac{n}{2}$ comes from the fact that $(a,a+\frac{n}{2},a)$ is not an arithmetic progression for any $a$) arithmetic progressions in $\mathbb{Z}_n$ we get
\[ 
R(3,\mathbb{Z}_n,2)  \geq  \frac{\binom{n}{2}-\frac{n}{2}-3n}{4}=\frac{n^2-8n}{8}.
\]

To get an upper bound for $R(3,\mathbb{Z}_n,2)$ we present a coloring with as few monochromatic arithmetic progressions as possible. Partition $\mathbb{Z}_n$ into disjoint parts $G_1=\{0,\dots,n/4-1\}$, $G_2=\{n/4,\dots,n/2-1 \}$, $G_3=\{n/2, \dots, 3n/4-1\}$ and $G_4=\{3n/4,\dots,n-1\}$. Color $G_1$ and $G_3$ red, and $G_2$ and $G_4$ blue. Inside $G_1$ there will be $2i$ monochromatic arithmetic progressions of the form $(a,a+n/8-i,a+n/4-2i)$ for $1\leq i\leq n/8-1$, and hence a total number of $\sum_{i=1}^{n/8-1} 2i = 2\frac{\frac{n}{8}(\frac{n}{8}-1)}{2}=\frac{n^2}{64}-\frac{n}{8}$ arithmetic progressions. For any arithmetic progression $\{a,b,c\}$ in $G_1$, there is an arithmetic progression $\{a,b+n/2,c\}$ with $a,c$ in $G_1$ and $b+n/2$ in $G_3$. Since all elements of $G_1$ and $G_3$ have the same color, all the mentioned arithmetic progressions are monochromatic. By symmetry we get that the total number of monochromatic arithmetic progressions in the coloring is $8 \cdot (\frac{n^2}{64}-\frac{n}{8})$, hence
\[
R(3,\mathbb{Z}_n,2) \leq \frac{n^2-8n}{8}.
\]

{\bf n mod 8 = 2, n mod 3 $\neq$ 0:}
The arguments in the case n mod 8 = 0, n mod 3 $\neq$ 0 to show that $R(3,\mathbb{Z}_n,2) \geq \frac{n^2-8n}{8}$ requires only that $n$ is even, which holds also in this case. By Lemma \ref{lem:even} we can sharper the lower bound by rounding up to an even number;
\[ 
R(3,\mathbb{Z}_n,2) \geq \frac{(n-2)(n-6)}{8}.
\]
Let us partition $\mathbb{Z}_n$ into disjoint parts $G_1=\{0,\dots,(n+2)/4-1\}$, $G_2=\{(n+2)/4,\dots,n/2-1 \}$, $G_3=\{n/2,\dots,(3n+2)/4-1\}$ and $G_4=\{(3n+2)/4,\dots,n-1\}$. Color $G_1$ and $G_3$ red, and $G_2$ and $G_4$ blue. Note that $|G_1|=|G_3|=1+|G_2|=1+|G_4|$.

Inside $G_1$ there will be $2i$ monochromatic arithmetic progressions of the form $(a,a+(n+2)/8-i,a+(n+2)/4-2i)$ for $1\leq i\leq (n+2)/8-1$, and hence a total number of $\sum_{i=1}^{(n+2)/8-1} 2i = 2\frac{\frac{n+2}{8}(\frac{n+2}{8}-1)}{2}=\frac{(n+2)^2}{64}-\frac{n+2}{8}$ arithmetic progressions. 

Similarly, inside $G_1$ there will be $1+2i$ monochromatic arithmetic progressions of the form $(a,a+(n+2)/8-i,a+(n+2)/4-2i)$ for $0\leq i\leq (n+2)/8-2$, and hence a total number of $\sum_{i=0}^{(n+2)/8-2} (2i+1) =\frac{n+2}{8}-1 + 2\frac{(\frac{n+2}{8}-1)(\frac{n+2}{8}-2)}{2}$ arithmetic progressions.

To count all monochromatic arithmetic progressions in $\{1,\dots,n\}$ is equivalent to count $4\cdot \#(\textrm{ A.P. in $G_1$})+4\cdot \#(\textrm{ A.P. in $G_2$})$. Carrying out the elementary calculations we get
\[
R(3,\mathbb{Z}_n,2) \leq \frac{(n-2)(n-6)}{8}.
\]

{\bf n mod 8 = 4, n mod 3 $\neq$ 0:}
By the same arguments as in the case with n mod 8 = 0, n mod 3 $\neq$ 0 we get
\[
R(3,\mathbb{Z}_n,2) \geq \frac{n^2-8n}{8}.
\]

Let us partition $\mathbb{Z}_n$ into disjoint parts $G_1=\{0,1,\dots,n/4-1\}$, $G_2=\{n/4,\dots,n/2-1 \}$, $G_3=\{n/2,\dots,3n/4-1\}$ and $G_4=\{3n/4,\dots,n-1\}$. Color $G_1$ and $G_3$ red, and $G_2$ and $G_4$ blue. 

Inside $G_1$ there will be $2i-1$ monochromatic arithmetic progressions of the form $(a,a+(n+4)/8-i,a+(n+4)/4-2i)$ for $1\leq i\leq (n+4)/8-1$, and hence a total number of $\sum_{i=1}^{(n+4)/8-1} (2i-1) =-(\frac{n+4}{8}-1)+ 2\frac{\frac{n+4}{8}(\frac{n+4}{8}-1)}{2}=-\frac{n}{8}+\frac{4}{8}+\frac{n+4}{8}\frac{n-4}{8}=\frac{n^2}{64}-\frac{n}{8}+\frac{2}{8}$ arithmetic progressions.

To count all monochromatic arithmetic progressions in $\mathbb{Z}_n$ is equivalent to count $8\cdot \#(\textrm{ A.P. in $G_1$})$, hence
\[
R(3,\mathbb{Z}_n,2) \leq \frac{n^2-8n}{8} + 2.
\]

{\bf n mod 8 = 6, n mod 3 $\neq$ 0:}
The arguments in the case n mod 8 = 0, n mod 3 $\neq$ 0 to show that $R(3,\mathbb{Z}_n,2) \geq \frac{n^2-8n}{8}$ holds also in this case. By Lemma \ref{lem:even} we can sharper the lower bound to an even number;
\[ 
R(3,\mathbb{Z}_n,2) \geq \frac{(n-2)(n-6)}{8}.
\]

Let us partition $\mathbb{Z}_n$ into disjoint parts $G_1=\{0,1,\dots,(n+2)/4-1\}$, $G_2=\{(n+2)/4,\dots,n/2-1 \}$, $G_3=\{n/2,\dots,(3n+2)/4-1\}$ and $G_4=\{(3n+2)/4,\dots,n-1\}$. Color $G_1$ and $G_3$ red, and $G_2$ and $G_4$ blue. 

Inside $G_1$ there will be $2i-1$ monochromatic arithmetic progressions of the form $(a,a+(n+6)/8-i,a+(n+6)/4-2i)$ for $1\leq i\leq (n+6)/8-1$, and hence a total number of $\sum_{i=1}^{(n+6)/8-1} (2i-1) = -(\frac{n+6}{8}-1)+ 2\frac{\frac{n+6}{8}(\frac{n+6}{8}-1)}{2}=-\frac{n}{8}+\frac{2}{8} +\frac{n+6}{8}\frac{n-2}{8}
=
\frac{n^2}{64}-\frac{4n}{64}+\frac{4}{64}$ arithmetic progressions.

Similarly, inside $G_2$ there will be $2i$ monochromatic arithmetic progressions of the form $(a,a+(n+6)/8-i,a+(n+6)/4-2i)$ for $1\leq i\leq (n+6)/8-2$, and hence a total number of $\sum_{i=1}^{(n+6)/8-2} (2i) = 2\frac{(\frac{n+6}{8}-2)(\frac{n+6}{8}-1)}{2}=\frac{n-10}{8}\frac{n-2}{8}
=
\frac{n^2}{64}-\frac{12n}{64}+\frac{20}{64}$ arithmetic progressions.

To count all monochromatic arithmetic progressions in $\mathbb{Z}_n$ is equivalent to count $4\cdot \#(\textrm{ A.P. in $G_1$})+4\cdot \#(\textrm{ A.P. in $G_2$})$. Carrying out the elementary calculations we get

\[
R(3,\mathbb{Z}_n,2) \leq \frac{(n-2)(n-6)}{8}.
\]

{\bf n mod 2 = 1, n mod 3 = 0:}
If $3$ divides $n$ one can easily see that the only difference from when $3$ and $n$ are coprime is that the triples $(a,a+n/3,a+2n/3)$, $(a+2n/3,a,a+n/3)$ and $(a,a+2n/3,a+n/3)$ corresponds to the same arithmetic progression. In this case
\[
p_n(X) =  \sigma(0;0,3,\ldots,3) +  \sigma(0;0,\ldots,0,-2,0,\ldots,0,-2,0,\ldots,0).
\]

Since $p_n(X) = \frac{3}{2}I_2+I_5+ \frac{7}{2}I_1 - \frac{7}{2}n \geq - \frac{7}{2}n$, and since there are $\binom{n}{2}-\frac{2n}{3}$ different (the term $-\frac{2n}{3}$ comes from the fact that we do not want to count the triples $(a,a+\frac{n}{3},a+\frac{2n}{3})$ more than once) arithmetic progressions we get
\[ 
R(3,\mathbb{Z}_n,2) \geq  \frac{\binom{n}{2}-\frac{2n}{3}-\frac{7n}{2}}{4}=\frac{n^2}{8}-\frac{7n}{6}.
\]

To find a good coloring we split $\mathbb{Z}_n$ into disjoint parts $G_1,\dots,G_6$ with $|G_1|=|G_3|=|G_5|=|G_2|+1=|G_4|+1=|G_6|+1=\frac{n+3}{6}$ sorted such that if $i<j$ and we pick $a \in G_i$ and $b \in G_j$, then $a < b$. $G_1$,$G_3$ and $G_5$ are colored red, and $G_2$,$G_4$ and $G_6$ are colored blue.  All monochromatic progressions are given by their end points, and the end points should be of the same parity of the coloring. The way we partitioned the elements makes sure that every pair of monochromatic end points has a middle point in the same color. We start by considering pairs of end points in $G_1$ and $G_2$. We multiply this number by $9$ to get all pairs with $i<j$, $i$th element in $G_a$, $j$th element in $G_b$ for all $a-b \equiv 0(2)$, and then we add $\frac{n}{3}$ to get all arithmeric progressions of the type $(a,a+n/3,a+2n/3)$. There are two cases, $\frac{n+3}{6} \equiv 1$ and  $\frac{n+3}{6} \equiv 0 $ modulo 2, to treat separately. The different types of elements for the different cases are tabulated below:
\[
\begin{array}{c|cccc}
& \textrm{even in $G_1$} & \textrm{odd in $G_1$} & \textrm{even in $G_2$} & \textrm{odd in $G_2$} \\
\hline
(n+3)/6 \equiv 1 (2) & (n+9)/12 & (n-3)/12 & (n-3)/12 & (n-3)/12 \\
(n+3)/6 \equiv 0 (2) & (n+3)/12 & (n+3)/12 & (n+3)/12 & (n-9)/12 \\
\end{array}
\]
In both cases we get
\[
\begin{array}{rl}
\displaystyle R(3,\mathbb{Z}_n,2) & \displaystyle \leq 9({ (n+9)/12 \choose 2 }+3{(n-3)/12 \choose 2})+\frac{n}{3} \\
& \displaystyle = 9(3{(n+3)/12 \choose 2} + { (n-9)/12 \choose 2 })+ \frac{n}{3} \\
& \displaystyle = \frac{n^2}{8}-\frac{7n}{6}+\frac{27}{8}.
\end{array}
\]
Since this is an integer, and since we know that $R(3,\mathbb{Z}_n,2)$ is an integer, we can improve the lower bound slightly:
\[ 
R(3,\mathbb{Z}_n,2) \geq \frac{n^2}{8}-\frac{7n}{6}+\frac{3}{8}.
\]

{\bf n mod 8 = 0, n mod 3 = 0:}
By combining the arguments for when n mod 3 = 0 and when n mod 2 = 0 we find
\[
\begin{array}{c}
\displaystyle p_n(X) =  \sigma(0;0,3,\ldots,3) +  \sigma(0;0,-1,1,-1,\ldots,1,-1) +   \\
\displaystyle + \sigma(0;0,\ldots,0,-2,0,\ldots,0) +  \sigma(0;0,\ldots,0,-2,0,\ldots,0,-2,0,\ldots,0).
\end{array}
\]

Since $p_n(X) = \frac{3}{2}I_2+\frac{1}{2}I_3+I_4+I_5+5I_1 - 5n \geq - 5n$, and since there are $\binom{n}{2}-\frac{2n}{3}-\frac{n}{2}$ different arithmetic progressions we get
\[ 
R(3,\mathbb{Z}_n,2)\geq  \frac{\binom{n}{2}-\frac{2n}{3}-\frac{n}{2}-5n}{4}=\frac{n^2}{8}-\frac{5n}{3}.
\]

To find a good coloring we split $\mathbb{Z}_n$ into disjoint parts $G_1,\dots,G_{12}$ with $|G_1|=\dots =|G_{12}|=\frac{n}{12}$ sorted such that if $i<j$ and we pick $a \in G_i$ and $b \in G_j$, then $a < b$. Color $G_1,G_3,G_5,G_7,G_9,G_{11}$ red and $G_2,G_4,G_6,G_8,G_{10},G_{12}$ blue. By symmetry there will be equally many blue as red arithmetic progressions, so we restrict ourselves to counting red arithmetic progressions. Since $n$ is even, any arithmetic progressions in $\mathbb{Z}_n$ needs to have end points of the same parity. Let $e_2,e_2$ denote two elements of the same parity. If $e_1 \in G_1$ and $e_2 \in G_1 \cup G_5 \cup G_9$ then there are $2$ red arithmetic progressions $(e_1,a,e_2)$ and $(e_1,a+n/2,e_2)$, whereas if $e_1 \in G_1$ and $e_2 \in \mathbb{Z}_n \smallsetminus G_1 \cup G_5 \cup G_9$ there are no red monochromatic arithmetic progressions in $\mathbb{Z}_n$ with $e_1$ and $e_2$ as endpoints. Counting all possibilities one finds that if there are $m$ red arithmetic progressions in $G_1$ then there are $72m+\frac{n}{3}+n$ monochromatic arithmetic progressions in $\mathbb{Z}_n$. The term $72m$ is easily found using symmetries to find all arithmetic progressions "similar" to an arithmetic progression fully contained in $G_1$, $\frac{n}{3}$ comes from that all triples $(a,a+n/3,a+2n/3)$ are monochromatic, and the term $n$ comes from the arithmetic progressions of the type $(a,a+n/6, a+n/3)$.

To finish the calculation of the number of arithmetic progressions in the specified coloring we need to find the number of arithmetic progressions in $G_1$. We have:

\[
\begin{array}{c|cc}
& \textrm{even in $G_1$} & \textrm{odd in $G_1$}  \\
\hline
n/12 \equiv 0 (2) & n/24 & n/24 \\
\end{array}
\]
and thus we get
\[
\begin{array}{rl}
\displaystyle R(3,\mathbb{Z}_n,2) & \displaystyle \leq 2\cdot 72{ n/24 \choose 2 } +\frac{n}{3}+n\\
& \displaystyle = \frac{n^2}{8}-\frac{5n}{3}.
\end{array}
\]

{\bf n mod 8 = 4, n mod 3 = 0:}
As in the case when n mod 8 = 0, n mod 3 = 0 we get
\[ 
R(3,\mathbb{Z}_n,2)  \geq  \frac{\binom{n}{2}-\frac{2n}{3}-\frac{n}{2}-5n}{4}=\frac{n^2}{8}-\frac{5n}{3}.
\]

An equivalent analysis as for the case n mod 8 = 0, n mod 3 = 0 also shows that if there are $m$ red arithmetic progressions in $G_1$ then there are $72m+\frac{n}{3}+n$ monochromatic arithmetic progressions in $\mathbb{Z}_n$.

The calculation for the number of arithmetic progressions in $G_1$ is different than the previous case:

\[
\begin{array}{c|cc}
& \textrm{even in $G_1$} & \textrm{odd in $G_1$}  \\
\hline
n/12 \equiv 1 (2) & (n+12)/24 & (n-12)/24 \\
\end{array}
\]
and thus we get
\[
\begin{array}{rl}
\displaystyle R(3,\mathbb{Z}_n,2) & \displaystyle \leq 72{ (n+12)/24 \choose 2 }+ 72{ (n-12)/24 \choose 2 } + \frac{n}{3}+n  \\
& \displaystyle = \frac{n^2}{8}-\frac{5n}{3}+18.
\end{array}
\]

{\bf n mod 4 = 2, n mod 3 = 0:}
As in the case when n mod 8 = 0, n mod 3 = 0 we get
\[ 
R(3,\mathbb{Z}_n,2)  \geq  \frac{\binom{n}{2}-\frac{2n}{3}-\frac{n}{2}-5n}{4}=\frac{n^2}{8}-\frac{5n}{3}.
\]

Since $n$ is not divisible by $12$ we cannot pursue the problem identically to the case when n mod 8 = 0, n mod 3 = 0. We have to make a small adjustment and let $|G_1|=|G_3|=|G_5|=|G_7|=|G_9|=|G_{11}|=|G_2|+1=|G_4|+1=|G_6|+1=|G_8|+1=|G_{10}|+1=|G_{12}|+1=\frac{n+6}{12}$. When the same analysis as in that case we see that if there are $m_1$ red arithmetic progressions in $G_1$ and $m_2$ blue arithmetic progressions in $G_2$, then there are $36m_1+36m_2+\frac{n}{3}+n$ monochromatic arithmetic progressions in $\mathbb{Z}_n$.

To count the total number of arithmetic progressions note that we have the following two cases:
\[
\begin{array}{c|cccc}
& \textrm{even in $G_1$} & \textrm{odd in $G_1$} & \textrm{even in $G_2$} & \textrm{odd in $G_2$}  \\
\hline
(n+6)/12 \equiv 0 (2) & (n+6)/24 & (n+6)/24 & (n+6)/24 & (n-18)/24 \\
(n+6)/12 \equiv 1 (2) & (n+18)/24 & (n-6)/24 & (n-6)/24 & (n-6)/24 \\
\end{array}
\]
Thus it follows that
\[
\begin{array}{rl}
\displaystyle R(3,\mathbb{Z}_n,2) & \displaystyle \leq 36 \cdot 3{ (n+6)/24 \choose 2 }+ 36{ (n-18)/24 \choose 2 } + \frac{n}{3}+n  \\
& \displaystyle \leq 36 \cdot 3{ (n-6)/24 \choose 2 }+ 36{ (n+18)/24 \choose 2 } + \frac{n}{3}+n \\
& \displaystyle = \frac{n^2}{8}-\frac{5n}{3}+\frac{27}{2}.
\end{array}
\]
Since this is an integer and since $R(3,\mathbb{Z}_n,2)$ has to be an integer it follows that we can improve the lower bound slightly:
\[ 
R(3,\mathbb{Z}_n,2) \geq \frac{n^2}{8}-\frac{5n}{3}+\frac{1}{2}.
\]

\end{proof}

\begin{proof}[Proof of Corollary \ref{cor:dihedralcase}]
Let us denote the elements of the group $D_{2n}$ by $1$, $r$, $\dots$, $r^{n-1}$, $s$, $sr$, $\dots$, $sr^{n-1}$. Since the sets of elements $\{1,r,\dots,r^{n-1}\}$ and $\{s,sr,\dots,sr^{n-1} \}$ both contain the same number of arithmetic progressions as $\mathbb{Z}_n$ we get that $R(D_{2n};3) \geq 2R(\mathbb{Z}_n;3)+m$, where $m$ is the number of arithmetic progressions that does not only require rotation action. In $D_{2n}$ it holds that $rs=sr^{-1}$ and $s^2=1$, hence $(r^i,r^isr^j,r^isr^jsr^j)=(r^i,r^isr^j,r^issr^{-j}r^j)=(r^i,r^isr^j,r^i)$ is a degenerate arithmetic progression for any choice of $i$ and $j$. The same happens for arithmetic progressions $(sr^i,sr^isr^j,sr^isr^jsr^j)$. Since these are all possible arithmetic progressions of $D_{2n}$ that are not rotations it follows that $m=0$. Since all arithmetic progressions containing reflections are degenerate we can color the sets $\{1,r,\dots,r^{n-1}\}$ and $\{s,sr,\dots,sr^{n-1} \}$ independently, and thus $R(D_{2n};3)=2R(\mathbb{Z}_n;3)$ as desired.
\end{proof}

\section*{Acknowledgements}
I would like to thank Alexander Engstr\"om for introducing me to this problem and for his advice when I didn't know how to proceed. I also want to thank Markus Schweighofer and Cynthia Vinzant for their valuable feedback and corrections.


\begin{thebibliography}{99} 

\bibitem{Cameron2007}
Peter Cameron, 
Javier Cilleruelo and 
Oriol Serra. 
On monochromatic solutions of equations in groups. 
\emph{Rev. Mat. Iberoam.} {\bf 23} (2007), no. 1, 385--395.

\bibitem{Lasserre2001}
Jean Bernard Lasserre. 
Global optimization with polynomials and the problem of moments.
\emph{SIAM J. Optim.} {\bf 11} (2001), no. 3, 796--817.

\bibitem{Lasserre2010}
Jean Bernard Lasserre.
\emph{Moments, positive polynomials and their applications.}
Imperial College Press Optimization Series, 1. \emph{Imperial College Press, London,} 2010. 361 pp.

\bibitem{Laurent2009}
Monique Laurent. 
Sums of squares, moment matrices and optimization over polynomials.
Chapter of "Emerging applications of algebraic geometry". \emph{Springer, New York,} 2009. 157--270.

\bibitem{Parrilo2008}
Pablo A. Parrilo and 
Helfried Peyrl.
Computing sum of squares decompositions with rational coefficients.
\emph{Theoret. Comput. Sci.} {\bf 409} (2008), no. 2, 269--281.

\bibitem{Sjoland_methods}
Erik Sj\"oland.
Using real algebraic geometry to solve combinatorial problems with symmetries.
Preprint available at http://arxiv.org/abs/1408.1065.

\end{thebibliography}
\end{document}